\numberwithin{equation}{section}
\newtheorem{theorem}{Theorem}[section]
\theoremstyle{definition}
\newtheorem{remark}[theorem]{Remark}
\theoremstyle{definition}
\theoremstyle{definition}
\def\dashint{\operatorname%
{\,\,\text{\bf-}\kern-.98em\DOTSI\intop\ilimits@\!\!}}
\def\\det{\text{det}}
\def\.5{\frac{1}{2}}
\newcommand{\RN}[1]{%
  \textup{\uppercase\expandafter{\romannumeral#1}}%
}
\renewcommand{\epsilon}{\varepsilon}
\newcounter{marnote}
\begin{document}
\title[Exact solutions for the insulated and perfect conductivity problems]{Exact solutions for the insulated and perfect conductivity problems with concentric balls}


\author[Z.W. Zhao]{Zhiwen Zhao}

\address[Z.W. Zhao]{Beijing Computational Science Research Center, Beijing 100193, China.}
\email{zwzhao365@163.com}

\date{\today} 



\begin{abstract}
The insulated and perfect conductivity problems arising from high-contrast composite materials are considered in all dimensions. The solution and its gradient, respectively, represent the electric potential and field. The novelty of this paper lies in finding exact solutions for the insulated and perfect conductivity problems with concentric balls. Our results show that there appears no electric field concentration for the insulated conductivity problem, while the electric field for the perfect conductivity problem exhibits sharp singularity with respect to the small distance between interfacial boundaries of the interior and exterior balls. This discrepancy reveals that concentric balls is the optimal structure of insulated composites, but not for superconducting composites.

\end{abstract}

\maketitle



\section{Introduction and main results}

High-contrast fiber-reinforced composite materials have been widely used in industrial field due to their properties of less expensive, lighter, stronger or more durable by contrast with common materials. Each year, high-contrast composites bring hundreds of new technical innovations and applications, from golf clubs, tennis rackets, buildings, bridges to microchips, aircraft, missiles and spacecraft. Quantitative analysis on mechanical properties of high-contrast composites is the key to their applications, especially to the finding of optimal structure of composites.

High-contrast composites with closely located inclusions can be described by the insulated and perfect conductivity equations. It is well known that the electric field, which is the gradient of solution, concentrates highly in the thin gaps between inclusions. The problem of estimating the gradient of solution has been actively investigated since the famous work of Babu\u{s}ka et al. \cite{BASL1999} and has resulted in a large number of papers involving different methods and cases based on shapes of inclusions, dimensions, applied boundary conditions, for example, see \cite{W2021,Y2016,DLY2021,DLY2022,ACKLY2013,AKL2005,AKLLL2007,BLY2009,BLY2010,BT2013,LY202102,LY2009,Y2007,Y2009} and the references therein. The gradient blow-up rate has been captured in these papers. To be specific, with regard to the perfect conductivity problem, when the distance $\varepsilon$ between two strictly convex inclusions goes to zero, the gradient of the solution blows up at the rate of $\varepsilon^{-1/2}$, $|\varepsilon\ln\varepsilon|^{-1}$ and $\varepsilon^{-1}$, respectively, in dimensions $d=2$, $d=3$ and $d\geq4$. Different from the perfect conductivity problem, it has been recently proved in \cite{DLY2022} that the gradient blow-up rate for the insulated conductivity problem depends not only on the dimension but also on the principal curvature of the surfaces of inclusions. In addition, for nonlinear equation, we refer to \cite{G2012,CS2019,CS201902}.

We should point out that the volumes of the whole matrix domains considered in all the aforementioned work are of constant order with respect to the distance $\varepsilon$ and the electric field only appears blow-up in the narrow region between two inclusions. So a natural problem is that when the volume of the considered matrix domain degenerates to be of infinitely small quantity order in terms of $\varepsilon$ such as order $O(\varepsilon)$, that is, the whole matrix domain becomes the thin gap, whether the electric field concentrates highly in the whole matrix domain. In order to answer this question, in this paper we consider a mathematical model of high-contrast composite materials with the core-shell geometry modeled by concentric balls in all dimensions. Let $B_{r_{0}},B_{r_{0}+\varepsilon}\subset\mathbb{R}^{d}\,(d\geq2)$ be two balls centered at the origin with the radii $r_{0}$ and $r_{0}+\varepsilon$, respectively, where $\varepsilon$ is a positive constant. In the presence of concentric balls $B_{r_{0}}$ and $B_{r_{0}+\varepsilon}$, we consider the following insulated and perfect conductivity problem:
\begin{equation}\label{ZZWW001}
\begin{cases}
\Delta{u}=0,& \mbox{in}~B_{r_{0}+\varepsilon}\setminus\overline{B}_{r_{0}},\\
\frac{\partial{u}}{\partial\nu}=0,&\mbox{on}~\partial B_{r_{0}},\\
u=\varphi,&\mbox{on}~\partial{B}_{r_{0}+\varepsilon},
\end{cases}
\end{equation}
and
\begin{equation}\label{YHU0.002}
\begin{cases}
\Delta{u}=0,& \mbox{in}~B_{r_{0}+\varepsilon}\setminus\overline{B}_{r_{0}},\\
u=C^{0},&\mbox{on}~\overline{B}_{r_{0}},\\
\int_{\partial{B}_{r_{0}}}\frac{\partial{u}}{\partial\nu}=0,\\
u=\varphi,&\mbox{on}~\partial{B}_{r_{0}+\varepsilon},
\end{cases}
\end{equation}
where $C^{0}$ is the free constant determined by the third line of \eqref{YHU0.002}, $\nu$ denotes the unit outer normal to the domain, the boundary data $\varphi$ belongs to $C^{2}(\partial B_{r_{0}+\varepsilon})\cap C^{k,\alpha}(\partial B_{r_{0}+\varepsilon})$ with $k\geq0,\,\alpha\in(0,1]$ satisfying the following condition:
\begin{align}\label{INDEX006}
\text{$k+\alpha>\frac{1}{2}$ if $d=2$, and $k+\alpha>\frac{d}{2}-1$ if $d\geq3$.}
\end{align} 
In physics, the interior ball $B_{r_{0}}$ denotes the stiff inclusion (or the fiber) and the domain $B_{r_{0}+\varepsilon}\setminus\overline{B}_{r_{0}}$ represents the matrix. With regard to the existence, uniqueness and regularity of weak solutions to problems \eqref{ZZWW001} and \eqref{YHU0.002}, see \cite{BLY2009,BLY2010}.

For $x\in \overline{B_{r_{0}+\varepsilon}\setminus B_{r_{0}}}$, write the polar coordinates as $x=(r,\xi)\in[r_{0},r_{0}+\varepsilon]\times\mathbb{S}^{d-1}$, where $r=|x|$. Let $N_{k,d}$ represent the dimension of the space of the homogeneous harmonics of degree $k$ in $d$ dimensions, whose value is given as follows:
\begin{align}\label{DIMEN001}
N_{k,d}=
\begin{cases}
(2k+d-2)\frac{(k+d-3)!}{k!(d-2)!},&k\geq1,\;d\geq2,\\
1,&k=0,\,d\geq2.
\end{cases}
\end{align}
Then using Theorem 2.36 and Corollary 4.14 in \cite{AH2012}, we can use spherical harmonics to expand the above boundary data $\varphi$ as follows:
\begin{align}\label{WZMAQ001}
\varphi((r_{0}+\varepsilon)\xi)=\lim_{m\rightarrow\infty}\sum^{m}_{k=0}\sum^{N_{k,d}}_{l=1}\langle \varphi,Y_{k,l}\rangle_{_{\mathbb{S}^{d-1}}}Y_{k,l}(\xi),\quad\text{uniformly in}\;\xi\in\mathbb{S}^{d-1},
\end{align}
where $N_{k,d}$ is given by \eqref{DIMEN001}, $\langle \varphi,Y_{k,l}\rangle_{_{\mathbb{S}^{d-1}}}=\int_{\mathbb{S}^{d-1}}\varphi((r_{0}+\varepsilon)\xi)Y_{k,l}(\xi)d\xi$, the set $\{Y_{k,l}\}_{k,l}$, whose element $Y_{k,l}(\xi)$ is a spherical harmonic of degree $k$, forms an orthonormal basis of $L^{2}(\mathbb{S}^{d-1})$. Note that for $k=0$, we have $Y_{0,0}(\xi)=|\mathbb{S}^{d-1}|^{-1/2}$. The expansion in \eqref{WZMAQ001} actually provides a selection method of finite-dimensional approximation sequence for the boundary data $\varphi$. For simplicity, for any integer $m\geq0$, denote
\begin{align}\label{AMZW001}
\varphi_{m}(x):=\sum^{m}_{k=0}\sum^{N_{k,d}}_{l=1}a_{kl} Y_{k,l}(\xi),\quad\mathrm{on}\;\partial B_{r_{0}+\varepsilon},
\end{align}
where $a_{kl}:=\langle \varphi,Y_{k,l}\rangle_{_{\mathbb{S}^{d-1}}}=\int_{\mathbb{S}^{d-1}}\varphi((r_{0}+\varepsilon)\xi)Y_{k,l}(\xi)d\xi$. Remark that if $\varphi\not\equiv C$ on $\partial B_{r_{0}+\varepsilon}$, then we have $m>0$. Then we restate \eqref{WZMAQ001} as follows:
\begin{align}\label{EXPAN005}
\sup\limits_{x\in\partial B_{r_{0}+\varepsilon}}|\varphi_{m}(x)-\varphi(x)|\rightarrow0,\quad\mathrm{as}\;m\rightarrow\infty.
\end{align}
For $r\in[r_{0},r_{0}+\varepsilon]$ and any integer $i$, define
\begin{align}\label{NOTA001}
\rho_{i}(r):=
\begin{cases}
-\ln r,&i=2,\\
r^{2-i},&i\neq2.
\end{cases}
\end{align}
For $k\geq1$, introduce some constants as follows:
\begin{align}\label{COEFF001}
\begin{cases}
c^{_{(1)}}_{k}:=\frac{k\rho_{4-d-2k}(r_{0})\rho_{4-d-k}(r_{0}+\varepsilon)}{k\rho_{4-d-2k}(r_{0})+(k+d-2)\rho_{4-d-2k}(r_{0}+\varepsilon)},\vspace{0.5ex}\\
c^{_{(2)}}_{k}:=\frac{(k+d-2)\rho_{4-d-k}(r_{0}+\varepsilon)}{k\rho_{4-d-2k}(r_{0})+(k+d-2)\rho_{4-d-2k}(r_{0}+\varepsilon)}.
\end{cases}
\end{align}
The first main result is concerned with explicit solution for the insulated conductivity problem with concentric balls.
\begin{theorem}\label{thm1}
Suppose that $d\geq2$ and $\varepsilon>0$. For the nonconstant boundary data $\varphi\in C^{2}(\partial B_{r_{0}+\varepsilon})\cap C^{k,\alpha}(\partial B_{r_{0}+\varepsilon})$ with indices $k\geq0,\,\alpha\in(0,1]$ satisfying condition \eqref{INDEX006}, let $u\in H^{1}(B_{r_{0}+\varepsilon})\cap C^{1}(\overline{B_{r_{0}+\varepsilon}\setminus B_{r_{0}}})$ be the solution of the insulated conductivity problem \eqref{ZZWW001}. Then for $x=r\xi\in B_{r_{0}+\varepsilon}\setminus \overline{B}_{r_{0}}$,
\begin{align}\label{RESULT001}
u(x)=a_{00}|\mathbb{S}^{d-1}|^{-1/2}+\sum^{\infty}_{k=1}\sum^{N_{k,d}}_{l=1}a_{kl}(c^{_{(1)}}_{k}r^{2-d-k}+c^{_{(2)}}_{k}r^{k})Y_{k,l}(\xi),
\end{align}
where $N_{k,d}$ is given in \eqref{DIMEN001}, $a_{kl}=\int_{\mathbb{S}^{d-1}}\varphi((r_{0}+\varepsilon)\xi)Y_{k,l}(\xi)d\xi$, the values of $c^{_{(i)}}_{k}$, $i=1,2,\,k\geq1$ are given by \eqref{COEFF001}.

\end{theorem}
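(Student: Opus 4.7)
The plan is to solve \eqref{ZZWW001} by separation of variables in spherical coordinates. Writing a candidate solution as
$$u(r,\xi)=\sum_{k\geq 0}\sum_{l=1}^{N_{k,d}} R_{kl}(r)\,Y_{k,l}(\xi),$$
the Laplace equation $\Delta u=0$ in the annulus reduces, via the eigenvalue identity $\Delta_{\mathbb{S}^{d-1}}Y_{k,l}=-k(k+d-2)Y_{k,l}$ and orthogonality, to the Euler-type radial ODE
$$R_{kl}''+\tfrac{d-1}{r}R_{kl}'-\tfrac{k(k+d-2)}{r^{2}}R_{kl}=0,$$
whose two linearly independent solutions are $1$ and $\rho_{d}(r)$ for $k=0$, and $r^{k}$ and $r^{2-d-k}$ for $k\geq 1$. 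So for each mode $(k,l)$ the unknowns are two coefficients $A_{kl}$ and $B_{kl}$ multiplying $r^{k}$ and $r^{2-d-k}$, respectively.

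To avoid convergence issues at the start, I would first work with the finite truncations $\varphi_{m}$ defined in \eqref{AMZW001}. For that datum the ansatz is a finite sum $u_{m}$, which is automatically harmonic in $B_{r_{0}+\varepsilon}\setminus\overline{B}_{r_{0}}$. The Dirichlet condition on $\partial B_{r_{0}+\varepsilon}$ gives, mode by mode,
$$A_{kl}(r_{0}+\varepsilon)^{k}+B_{kl}(r_{0}+\varepsilon)^{2-d-k}=a_{kl},$$
while the insulating Neumann condition $\partial_{r}u_{m}|_{r=r_{0}}=0$ on $\partial B_{r_{0}}$ gives
$$k\,A_{kl}\,r_{0}^{k-1}-(k+d-2)\,B_{kl}\,r_{0}^{1-d-k}=0.$$
A direct algebraic computation, using $\rho_{4-d-2k}(r)=r^{d+2k-2}$ and $\rho_{4-d-k}(r)=r^{d+k-2}$ from \eqref{NOTA001}, shows that the unique solution of this $2\times 2$ linear system is exactly $A_{kl}=a_{kl}c^{_{(2)}}_{k}$ and $B_{kl}=a_{kl}c^{_{(1)}}_{k}$ with $c^{_{(i)}}_{k}$ as in \eqref{COEFF001}. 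For $k=0$ the Neumann condition forces the coefficient of $\rho_{d}(r)$ to vanish, leaving the constant term $a_{00}|\mathbb{S}^{d-1}|^{-1/2}$ required by Dirichlet. By uniqueness for the insulated conductivity problem with datum $\varphi_{m}$, $u_{m}$ is indeed that solution.

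The final step is to pass $m\to\infty$. From \eqref{EXPAN005} together with the maximum principle applied to $u_{m}-u_{m'}$ (which is harmonic in the annulus, vanishes on $\partial B_{r_{0}+\varepsilon}$ up to $\|\varphi_{m}-\varphi_{m'}\|_{\infty}$, and has zero normal derivative on $\partial B_{r_{0}}$), $u_{m}$ converges uniformly on $\overline{B_{r_{0}+\varepsilon}\setminus B_{r_{0}}}$ to the solution $u$ of \eqref{ZZWW001} with datum $\varphi$. In parallel I would show that the series on the right-hand side of \eqref{RESULT001} converges uniformly to the same limit, using the regularity hypothesis \eqref{INDEX006}, the addition formula $\sum_{l}|Y_{k,l}(\xi)|^{2}=N_{k,d}/|\mathbb{S}^{d-1}|$, and the uniform-in-$k$ boundedness of the radial factor $c^{_{(1)}}_{k}r^{2-d-k}+c^{_{(2)}}_{k}r^{k}$ on $[r_{0},r_{0}+\varepsilon]$, which is immediate from the definition \eqref{COEFF001}. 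The main obstacle is exactly this convergence step: one must check that the threshold in \eqref{INDEX006} is sharp enough to force summability, uniformly in $\xi$, of $\sum_{k}\big(\sum_{l}a_{kl}^{2}\big)^{1/2}N_{k,d}^{1/2}$ after absorbing the polynomial-in-$k$ growth $N_{k,d}\sim k^{d-2}$ against the $C^{k,\alpha}$-decay of the spherical-harmonic coefficients $a_{kl}$; once this is established, the limiting function is harmonic in the annulus, matches both boundary conditions, and so coincides with $u$ by uniqueness.
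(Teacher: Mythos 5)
Your proposal is correct and follows essentially the same route as the paper: separation of variables applied to the truncated data $\varphi_{m}$, the same $2\times2$ linear system determining the radial coefficients $c^{_{(1)}}_{k},c^{_{(2)}}_{k}$, and a maximum-principle/Hopf-lemma argument to pass to the limit $m\to\infty$. The only remark worth making is that the ``convergence obstacle'' you flag at the end is moot: the partial sums of the series in \eqref{RESULT001} are precisely the functions $u_{m}$, so their uniform convergence to $u$ is already guaranteed by the estimate $\sup|u_{m}-u|\leq\sup|\varphi_{m}-\varphi|$, and no separate summability analysis of the coefficients against \eqref{INDEX006} is needed.
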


\begin{remark}\label{EMA001}
From \eqref{RESULT001}, we obtain that for $x=r\xi\in B_{r_{0}+\varepsilon}\setminus \overline{B}_{r_{0}}$,
\begin{align}\label{GRA001}
\nabla u(x)=&\xi\partial_{r}u+\frac{1}{r}\nabla_{\mathbb{S}^{d-1}}u\notag\\
=&\sum^{\infty}_{k=1}\sum^{N_{k,d}}_{l=1}a_{kl}\big(c^{_{(1)}}_{k}(2-d-k)r^{1-d-k}+c^{_{(2)}}_{k}kr^{k-1}\big)Y_{k,l}(\xi)\xi\notag\\
&+\sum^{\infty}_{k=1}\sum^{N_{k,d}}_{l=1}a_{kl}(c^{_{(1)}}_{k}r^{1-d-k}+c^{_{(2)}}_{k}r^{k-1})\nabla_{\mathbb{S}^{d-1}}Y_{k,l}(\xi),
\end{align}
where $\nabla_{\mathbb{S}^{d-1}}$ is the first-order Beltrami operator on $\mathbb{S}^{d-1}$. Observe that
$$\|\varphi\|_{L^{2}(\partial B_{r_{0}+\varepsilon})}^{2}=\sum^{\infty}_{k=0}\sum^{N_{k,d}}_{l=1}a_{kl}^{2},$$
which, together with the fact that $r\in(r_{0},r_{0}+\varepsilon)$, reads that
$$|\nabla u|\leq C\bigg(\sum^{\infty}_{k=0}\sum^{N_{k,d}}_{l=1}a_{kl}^{2}\bigg)^{\frac{1}{2}}\leq C\|\varphi\|_{L^{\infty}(\partial B_{r_{0}+\varepsilon})},$$
for some $\varepsilon$-independent constant $C$. This implies that concentric balls is the optimal insulation structure of composite materials.
\end{remark}
\begin{remark}
By contrast with all the previous work on the blow-up of the electric field for the insulated and perfect conductivity problems, the novelty of our results obtained in Theorems \ref{thm1} and \ref{thm2} lies in the following two aspects. On one hand, the solution and its gradient are exactly given. On the other hand, our results in Theorems \ref{thm1} and \ref{thm2} hold for any $\varepsilon>0$, that is, the distance $\varepsilon$ allows to be a large positive constant.

\end{remark}

For $k\geq1$ and $r\in[r_{0},r_{0}+\varepsilon]$, define
\begin{align}\label{CONSTANT006}
\begin{cases}
\tilde{c}^{_{(1)}}_{k}(r)=\frac{\rho_{d}(r)-\rho_{d}(r_{0}+\varepsilon)}{\rho_{d}(r_{0})-\rho_{d}(r_{0}+\varepsilon)},\vspace{0.5ex}\\
\tilde{c}^{_{(2)}}_{k}(r)=\frac{(\rho_{2-k}(r)-\rho_{4-d-2k}(r_{0})\rho_{d+k}(r))\rho_{4-d-k}(r_{0}+\varepsilon)}{\rho_{4-d-2k}(r_{0}+\varepsilon)-\rho_{4-d-2k}(r_{0})}.
\end{cases}
\end{align}
With regard to the perfect conductivity problem with concentric balls, we have
\begin{theorem}\label{thm2}
Suppose that $d\geq2$ and $\varepsilon>0$. For the nonconstant boundary data $\varphi\in C^{2}(\partial B_{r_{0}+\varepsilon})\cap C^{k,\alpha}(\partial B_{r_{0}+\varepsilon})$ with indices $k\geq0,\,\alpha\in(0,1]$ satisfying condition \eqref{INDEX006}, let $u\in H^{1}(B_{r_{0}+\varepsilon})\cap C^{1}(\overline{B_{r_{0}+\varepsilon}\setminus B_{r_{0}}})$ be the solution of the perfect conductivity problem \eqref{YHU0.002}. Then for $x=r\xi\in B_{r_{0}+\varepsilon}\setminus \overline{B}_{r_{0}}$,
\begin{align}\label{QATZ002}
u(x)=\frac{a_{00}}{|\mathbb{S}^{d-1}|^{1/2}}+\sum^{\infty}_{k=1}\sum^{N_{k,d}}_{l=1}a_{kl}\Big(\tilde{c}^{_{(1)}}_{k}(r)\fint_{\mathbb{S}^{d-1}}Y_{k,l}(\xi)d\xi+\tilde{c}^{_{(2)}}_{k}(r)Y_{k,l}(\xi)\Big),
\end{align}
where $N_{k,d}$ is given by \eqref{DIMEN001}, $a_{kl}=\int_{\mathbb{S}^{d-1}}\varphi((r_{0}+\varepsilon)\xi)Y_{k,l}(\xi)d\xi$, $\tilde{c}^{_{(i)}}_{k}(r)$, $i=1,2,\,k\geq1$ are defined by \eqref{CONSTANT006}.

\end{theorem}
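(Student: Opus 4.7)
The plan is to adapt the separation-of-variables strategy used for Theorem~\ref{thm1} to the transmission conditions of the perfect-conductor problem. I decompose the candidate solution on the shell as $u(r\xi)=\sum_{k,l} u_{k,l}(r) Y_{k,l}(\xi)$. Harmonicity together with $-\Delta_{\mathbb{S}^{d-1}} Y_{k,l}=k(k+d-2)Y_{k,l}$ reduces the PDE to the Euler equation $r^{2} u_{k,l}''+(d-1)r u_{k,l}'-k(k+d-2)u_{k,l}=0$, whose general solution is $A_{k,l} r^{k}+B_{k,l} r^{2-d-k}$ for $k\geq 1$, and $A_{0,0}+B_{0,0}\,\rho_{d}(r)$ for $k=0$, with $\rho_d$ as in \eqref{NOTA001}.

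Next I would impose the boundary and flux conditions mode by mode. The outer Dirichlet condition gives $u_{k,l}(r_{0}+\varepsilon)=a_{kl}$. Testing the inner condition $u\equiv C^{0}$ against $Y_{k,l}$ with $k\geq 1$ forces $u_{k,l}(r_{0})=0$; the condition $\int_{\partial B_{r_{0}}} \partial_{\nu} u=0$ isolates the $k=0$ mode (since $\int_{\mathbb{S}^{d-1}} Y_{k,l}=0$ for $k\geq 1$) and yields $u_{0,0}'(r_{0})=0$. For $k\geq 1$, the $2\times 2$ system in $(A_{k,l},B_{k,l})$ solves uniquely and, after rewriting in the $\rho_{i}$ notation, identifies as $a_{kl}\,\tilde{c}^{_{(2)}}_{k}(r)Y_{k,l}(\xi)$, matching \eqref{CONSTANT006}. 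For $k=0$ the flux condition forces $B_{0,0}=0$, so $u_{0,0}\equiv a_{00}$, producing the leading term $a_{00}|\mathbb{S}^{d-1}|^{-1/2}$ of \eqref{QATZ002} and pinning down $C^{0}=a_{00}|\mathbb{S}^{d-1}|^{-1/2}$. The piece $\tilde{c}^{_{(1)}}_{k}(r)\fint_{\mathbb{S}^{d-1}} Y_{k,l}$ vanishes for each $k\geq 1$ by orthogonality with $Y_{0,0}$, and is retained in \eqref{QATZ002} only to display the structural form.

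To pass from finite to infinite expansions, I would first verify the identity for the truncations $\varphi_{m}$ of \eqref{AMZW001}: the corresponding finite spherical-harmonic sum is easily checked to solve \eqref{YHU0.002} with determined constant $C^{0}_{m}=a_{00}|\mathbb{S}^{d-1}|^{-1/2}$. Denoting this solution $u_{m}$, the difference $u_{m}-u$ is harmonic on the shell with boundary values $\varphi_{m}-\varphi$ on $\partial B_{r_{0}+\varepsilon}$ and the constant $C^{0}_{m}-C^{0}$ on $\partial B_{r_{0}}$; since $C^{0}$ is the spherical average of $\varphi$, one has $|C^{0}_{m}-C^{0}|\leq\|\varphi_{m}-\varphi\|_{L^{\infty}(\partial B_{r_{0}+\varepsilon})}$, and the maximum principle together with \eqref{EXPAN005} yields $\|u_{m}-u\|_{L^{\infty}}\to 0$.

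The main obstacle is then the convergence of the infinite series in \eqref{QATZ002} to this $L^{\infty}$ limit. With $s=r/(r_{0}+\varepsilon)$ and $s_{0}=r_{0}/(r_{0}+\varepsilon)\in(0,1)$, a direct simplification gives
\begin{equation*}
\tilde{c}^{_{(2)}}_{k}(r)=\frac{s^{k}-s_{0}^{d+2k-2}\, s^{2-d-k}}{1-s_{0}^{d+2k-2}},
\end{equation*}
and a monotonicity check (using that $s_{0}^{d+2k-2}$ decreases in $k$ and that the numerator is nonnegative and at most $s^{k}\leq 1$ for $s_0\leq s\leq 1$) yields the uniform bound $|\tilde{c}^{_{(2)}}_{k}(r)|\leq(1-s_{0}^{d})^{-1}$ for all $k\geq 1$ and $r\in[r_{0},r_{0}+\varepsilon]$. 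Coupled with the decay of $\{a_{kl}\}$ forced by the regularity hypothesis \eqref{INDEX006} (together with the sharp estimates from \cite{AH2012} and the addition formula $\sum_{l}|Y_{k,l}(\xi)|^{2}=N_{k,d}/|\mathbb{S}^{d-1}|$), this produces uniform absolute convergence of the series on $\overline{B_{r_{0}+\varepsilon}\setminus B_{r_{0}}}$ and closes the identification with $u$.
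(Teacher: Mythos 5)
Your proposal is correct and follows essentially the same route as the paper: separation of variables with spherical harmonics reducing to the Euler ODE on $(r_0,r_0+\varepsilon)$, mode-by-mode imposition of the boundary/flux conditions (your direct observation that the flux condition gives $u_{0,0}'(r_0)=0$ and $C^0=\fint\varphi$ is exactly what the paper's decomposition $u_m=C^0_m v_1+\sum a_{kl}v_{kl}$ plus integration by parts yields), and the maximum principle to pass from the truncations $\varphi_m$ to $\varphi$. Your explicit uniform bound $|\tilde{c}^{_{(2)}}_{k}(r)|\leq(1-s_0^d)^{-1}$ and the resulting absolute convergence of the series is a point the paper leaves implicit, and is a welcome addition.
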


\begin{remark}
Similar to \eqref{GRA001}, we deduce from \eqref{QATZ002} that for $x=r\xi\in B_{r_{0}+\varepsilon}\setminus \overline{B}_{r_{0}}$,
\begin{align}\label{QMEIZ001}
\nabla u(x)
=&\sum^{\infty}_{k=1}\sum^{N_{k,d}}_{l=1}a_{kl}\Big(\partial_{r}\tilde{c}^{_{(1)}}_{k}(r)\fint_{\mathbb{S}^{d-1}}Y_{k,l}(\xi)d\xi+\partial_{r}\tilde{c}^{_{(2)}}_{k}(r)Y_{k,l}(\xi)\Big)\notag\\
&+\sum^{\infty}_{k=1}\sum^{N_{k,d}}_{l=1}a_{kl}\frac{\tilde{c}^{_{(2)}}_{k}(r)}{r}\nabla_{\mathbb{S}^{d-1}}Y_{k,l}(\xi).
\end{align}
By Taylor expansion, it follows from a direct calculation that for a sufficiently small $\varepsilon>0$,
\begin{align*}
\partial_{r}\rho_{d}(r)=&
\begin{cases}
-r^{-1},&d=2,\\
-(d-2)r^{1-d},&d\geq3,
\end{cases}\quad
\begin{cases}
\partial_{r}\rho_{2-k}(r)=kr^{k-1},\\
\partial_{r}\rho_{d+k}(r)=-(d+k-2)r^{1-d-k},
\end{cases}
\end{align*}
and
\begin{align*}
\rho_{d}(r_{0})-\rho_{d}(r_{0}+\varepsilon)=&
\begin{cases}
\frac{\varepsilon}{r_{0}}+O(\varepsilon^{2}),&d=2,\\
(d-2)r^{d-3}_{0}\varepsilon+O(\varepsilon^{2}),&d\geq3,
\end{cases}\\
\rho_{4-d-2k}(r_{0}+\varepsilon)-\rho_{4-d-2k}(r_{0})=&(d+2k-2)r_{0}^{d+2k-3}\varepsilon+O(\varepsilon^{2}).
\end{align*}
Then we obtain that for $r\in(r_{0},r_{0}+\varepsilon)$,
\begin{align*}
\partial_{r}\tilde{c}^{_{(i)}}_{k}(r)\sim\varepsilon^{-1},\;\,i=1,2,\,k\geq1,\quad \frac{\tilde{c}^{_{(2)}}_{k}(r)}{r}\sim\varepsilon^{-1}.
\end{align*}
Substituting this into \eqref{QMEIZ001}, we have $|\nabla u|\sim\varepsilon^{-1}$. Recall that $\varepsilon^{-1}$ is the greatest blow-up rate captured in all the above-mentioned work. Then for the perfect conductivity problem with concentric spheres, the gradient of the solution exhibits sharp singularity with respect to the distance $\varepsilon$, which is greatly different from the result in Remark \ref{EMA001} in terms of the insulated conductivity problem.

\end{remark}

The detailed proofs of Theorems \ref{thm1} and \ref{thm2} are given in the next section.

\section{The proof of Theorems \ref{thm1} and \ref{thm2}}
In the following, we will construct the solutions having the form of separation of variables to find the exact solutions for the insulated and perfect conductivity problems. This idea is based on the observation that the considered domain satisfies the radial symmetry.

\begin{proof}[Proof of Theorem \ref{thm1}]
To begin with, for any integer $m\geq0$, consider the following boundary data problem
\begin{equation}\label{AMZW003}
\begin{cases}
\Delta{u}_{m}=0,& \mbox{in}~B_{r_{0}+\varepsilon}\setminus\overline{B}_{r_{0}},\\
\frac{\partial u_{m}}{\partial\nu}=0,&\mbox{on}~\overline{B}_{r_{0}},\\
u_{m}=\varphi_{m},&\mbox{on}~\partial{B}_{r_{0}+\varepsilon},
\end{cases}
\end{equation}
where $\varphi_{m}$ is given by \eqref{AMZW001}. In view of \eqref{EXPAN005}, it follows from the Hopf's Lemma and the maximum principle that
\begin{align*}
\sup\limits_{x\in\overline{B_{r_{0}+\varepsilon}\setminus{B_{r_{0}}}}}|u_{m}(x)-u(x)|\leq \sup\limits_{x\in\partial B_{r_{0}+\varepsilon}}|\varphi_{m}(x)-\varphi(x)|\rightarrow0,\quad\mathrm{as}\;m\rightarrow\infty.
\end{align*}
That is, the sequence $\{u_{m}\}$ converges uniformly to the solution $u$ of the original problem \eqref{ZZWW001} in $\overline{B_{r_{0}+\varepsilon}\setminus B_{r_{0}}}$. Then the problem is reduced to solving the explicit solution for problem \eqref{AMZW003} in the following.

According to \eqref{AMZW001}, we carry out the linear decomposition for the solution $u_{m}$ to problem \eqref{AMZW003} as follows:
\begin{align*}
u_{m}=\sum^{m}_{k=0}\sum^{N_{k,d}}_{l=1}a_{kl}v_{kl},\quad\mathrm{in}\;B_{r_{0}+\varepsilon}\setminus\overline{B}_{r_{0}},
\end{align*}
where, for $k=0,1,...,m$ and $l=1,...,N_{k,d}$, $v_{kl}$ verifies
\begin{align}\label{MZW001}
\begin{cases}
\Delta v_{kl}=0,&\mathrm{in}\;B_{r_{0}+\varepsilon}\setminus\overline{B}_{r_{0}},\\
\frac{\partial v_{kl}}{\partial\nu}=0,&\mathrm{on}\;\partial B_{r_{0}},\\
v_{kl}=Y_{k,l}(\xi),&\mathrm{on}\;\partial B_{r_{0}+\varepsilon}.
\end{cases}
\end{align}
First, in the case of $k=0$, we know that $\frac{\partial v_{00}}{\partial\nu}=0$ on $\partial B_{r_{0}}$ and $v_{00}=Y_{0,0}(\xi)=|\mathbb{S}^{d-1}|^{-1/2}$ on $\partial B_{r_{0}+\varepsilon}$. Then applying the maximum principle and the Hopf's Lemma, we obtain that $v_{00}=|\mathbb{S}^{d-1}|^{-1/2}$ in $B_{r_{0}+\varepsilon}\setminus\overline{B}_{r_{0}}$.

We now proceed to consider the case when $k>0$. Due to the radial symmetry of the domain, we assume that the solution of equation \eqref{MZW001} possesses the form of separation of variables $v_{kl}=f_{k}(r)Y_{k,l}(\xi)$. Recall that under the polar coordinates, the Laplace operator $\Delta$ can be expressed as
\begin{align}\label{ZMZMZ001}
\Delta=\partial_{rr}+\frac{d-1}{r}\partial_{r}+\frac{1}{r^{2}}\Delta_{\mathbb{S}^{d-1}},
\end{align}
where $\Delta_{\mathbb{S}^{d-1}}$ is the Laplace-Beltrami operator satisfying that $\Delta_{\mathbb{S}^{d-1}}Y_{k,l}(\xi)=-k(k+d-2)Y_{k,l}(\xi)$, which implies that spherical harmonics are eigenfunctions of the Laplace-Beltrami operator. Substituting these relations into \eqref{MZW001}, we deduce
\begin{align}\label{QAE001}
\begin{cases}
\partial_{rr}f_{k}(r)+\frac{d-1}{r}\partial_{r}f_{k}(r)-\frac{k(k+d-2)}{r^{2}}f_{k}(r)=0,&\mathrm{for}\;r\in(r_{0},r_{0}+\varepsilon),\\
\partial_{r}f_{k}(r_{0})=0,\\
f_{k}(r_{0}+\varepsilon)=1.
\end{cases}
\end{align}
Picking test function $f_{k}(r)=r^{s}$ and substituting it into the first line of \eqref{QAE001}, we obtain
\begin{align*}
s^{2}+s(d-2)-k(k+d-2)=0,
\end{align*}
which reads that $s=k$ or $s=-(d-2+k)$. Since second-order homogeneous linear differential equation only has two linear independent solutions, then the general solution of \eqref{QAE001} can be written as
\begin{align*}
f_{k}(r)=c_{k}^{_{(1)}}r^{-(d-2+k)}+c_{k}^{_{(2)}}r^{k},
\end{align*}
for some constants $c_{k}^{_{(i)}}$, $i=1,2.$ This, together with the second and third lines of \eqref{QAE001}, shows that
\begin{align*}
\begin{cases}
-c_{k}^{_{(1)}}(d+k-2)r_{0}^{-(d+k-1)}+c_{k}^{_{(2)}}kr_{0}^{k-1}=0,\\
c_{k}^{_{(1)}}r_{0}^{-(d+k-2)}+c_{k}^{_{(2)}}(r_{0}+\varepsilon)^{k}=1.
\end{cases}
\end{align*}
A direct computation shows that the values of $c_{k}^{_{(i)}}$, $i=1,2$ are given by \eqref{COEFF001}. Combining these facts above, we have
\begin{align*}
u_{m}=a_{00}|\mathbb{S}^{d-1}|^{-1/2}+\sum^{m}_{k=1}\sum^{N_{k,d}}_{l=1}a_{kl}(c^{_{(1)}}_{k}r^{2-d-k}+c^{_{(2)}}_{k}r^{k})Y_{k,l}(\xi).
\end{align*}
By sending $m\rightarrow\infty$, the proof of Theorem \ref{thm1} is finished.

\end{proof}

\begin{proof}[Proof of Theorem \ref{thm2}]
Analogously as before, for any integer $m\geq0$, consider
\begin{equation}\label{YHU0.009}
\begin{cases}
\Delta u_{m}=0,& \mbox{in}~B_{r_{0}+\varepsilon}\setminus\overline{B}_{r_{0}},\\
u=C^{0}_{m},&\mbox{on}~\overline{B}_{r_{0}},\\
\int_{\partial{B}_{r_{0}}}\frac{\partial u_{m}}{\partial\nu}=0,\\
u_{m}=\varphi_{m},&\mbox{on}~\partial{B}_{r_{0}+\varepsilon},
\end{cases}
\end{equation}
where the free constant $C^{0}_{m}$ is determined by the third line of \eqref{YHU0.009}, $\varphi_{m}$ is given by \eqref{AMZW001} satisfying \eqref{EXPAN005}. We first split the solution $u_{m}$ of problem \eqref{YHU0.009} as follows:
\begin{align}\label{DECO001}
u_{m}=C^{0}_{m}v_{1}+\sum^{m}_{k=0}\sum^{N_{k,d}}_{l=1}a_{kl}v_{kl},
\end{align}
where $v_{1}$ and $v_{kl}$, respectively, solve
\begin{align}\label{WZM001}
\begin{cases}
\Delta v_{1}=0,&\mathrm{in}\;B_{r_{0}+\varepsilon}\setminus\overline{B}_{r_{0}},\\
v_{1}=1,&\mathrm{on}\;\partial B_{r_{0}},\\
v_{1}=0,&\mathrm{on}\;\partial B_{r_{0}+\varepsilon},
\end{cases}\quad
\begin{cases}
\Delta v_{kl}=0,&\mathrm{in}\;B_{r_{0}+\varepsilon}\setminus\overline{B}_{r_{0}},\\
v_{kl}=0,&\mathrm{on}\;\partial B_{r_{0}},\\
v_{kl}=Y_{k,l}(\xi),&\mathrm{on}\;\partial B_{r_{0}+\varepsilon}.
\end{cases}
\end{align}
In view of the radial symmetry of the domain, we consider the solutions of the following forms:
\begin{align}\label{DZQ001}
v_{1}(x)=g(r),\quad v_{kl}(x)=f_{k}(r)Y_{k,l}(\xi),\quad x=(r,\xi)\in(r_{0},r_{0}+\varepsilon)\times\mathbb{S}^{d-1}.
\end{align}
In light of \eqref{ZMZMZ001} and substituting \eqref{DZQ001} into \eqref{WZM001}, we derive
\begin{align}\label{QZW001}
\begin{cases}
\partial_{rr}g(r)+\frac{d-1}{r}\partial_{r}g(r)=0,&\mathrm{for}\;r\in(r_{0},r_{0}+\varepsilon),\\
g(r_{0})=1,\\
g(r_{0}+\varepsilon)=0,
\end{cases}
\end{align}
and
\begin{align}\label{QZW002}
\begin{cases}
\partial_{rr}f_{k}(r)+\frac{d-1}{r}\partial_{r}f_{k}(r)-\frac{k(k+d-2)}{r^{2}}f_{k}(r)=0,&\mathrm{for}\;r\in(r_{0},r_{0}+\varepsilon),\\
f_{k}(r_{0})=0,\\
f_{k}(r_{0}+\varepsilon)=1.
\end{cases}
\end{align}
From the first line of \eqref{QZW001}, we know that $g(r)$ is the radial solution of the Laplace equation. Then we have
\begin{align*}
v_{1}=
\begin{cases}
c_{2}^{_{(1)}}\ln r+c_{2}^{_{(2)}},&d=2,\\
c_{d}^{_{(1)}}r^{2-d}+c_{d}^{_{(2)}},&d\geq3,
\end{cases}
\end{align*}
for some constants $c_{d}^{_{(i)}}$, $i=1,2,\,d\geq2$. Using the Dirichlet-type boundary value conditions in the second and third lines of \eqref{QZW001}, we solve the explicit values of these constants as follows:
\begin{align*}
\begin{cases}
c_{2}^{_{(1)}}=-\frac{1}{\ln(r_{0}+\varepsilon)-\ln r_{0}},\\
c_{2}^{_{(2)}}=\frac{\ln(r_{0}+\varepsilon)}{\ln(r_{0}+\varepsilon)-\ln r_{0}},
\end{cases}\quad
\begin{cases}
c_{d}^{_{(1)}}=\frac{1}{r_{0}^{2-d}-(r_{0}+\varepsilon)^{2-d}},\\
c_{d}^{_{(2)}}=-\frac{(r_{0}+\varepsilon)^{2-d}}{r_{0}^{2-d}-(r_{0}+\varepsilon)^{2-d}},
\end{cases}\quad\mathrm{for}\;d\geq3.
\end{align*}
By \eqref{NOTA001}, the explicit solution of $v_{1}$ can be unified as
\begin{align}\label{UAQ001}
v_{1}=\frac{\rho_{d}(r)-\rho_{d}(r_{0}+\varepsilon)}{\rho_{d}(r_{0})-\rho_{d}(r_{0}+\varepsilon)},\quad\mathrm{for}\;d\geq2.
\end{align}

With regard to $f_{k}(r)$ defined by \eqref{QZW002}, if $k=0$, then $f_{0}(r)$ becomes the radial solution of the Laplace equation. Then it follows from a similar calculation that
\begin{align}\label{UAQ002}
v_{00}=f_{0}(r)Y_{0,0}(\xi)=\frac{(\rho_{d}(r_{0})-\rho_{d}(r))|\mathbb{S}^{d-1}|^{-1/2}}{\rho_{d}(r_{0})-\rho_{d}(r_{0}+\varepsilon)},\quad\mathrm{for}\;d\geq2.
\end{align}
On the other hand, if $k>0$, similar to \eqref{QAE001}, we also have
\begin{align*}
f_{k}(r)=c_{kl}^{_{(1)}}r^{-(d-2+k)}+c_{kl}^{_{(2)}}r^{k},
\end{align*}
for some constants $c_{k}^{_{(i)}}$, $i=1,2.$ By the second and third lines in \eqref{QZW002}, these constants must verify
\begin{align*}
\begin{cases}
c_{k}^{_{(1)}}r_{0}^{-(d+k-2)}+c_{k}^{_{(2)}}r_{0}^{k}=0,\\
c_{k}^{_{(1)}}(r_{0}+\varepsilon)^{-(d+k-2)}+c_{k}^{_{(2)}}(r_{0}+\varepsilon)^{k}=1,
\end{cases}
\end{align*}
which yields that
\begin{align*}
c_{k}^{_{(1)}}=-\frac{r_{0}^{d+2k-2}(r_{0}+\varepsilon)^{d+k-2}}{(r_{0}+\varepsilon)^{d+2k-2}-r_{0}^{d+2k-2}},\quad c_{k}^{_{(2)}}=\frac{(r_{0}+\varepsilon)^{d+k-2}}{(r_{0}+\varepsilon)^{d+2k-2}-r_{0}^{d+2k-2}}.
\end{align*}
Then we have
\begin{align}\label{UAQ003}
v_{kl}=\frac{(\rho_{2-k}(r)-\rho_{4-d-2k}(r_{0})\rho_{d+k}(r))\rho_{4-d-k}(r_{0}+\varepsilon)}{\rho_{4-d-2k}(r_{0}+\varepsilon)-\rho_{4-d-2k}(r_{0})}Y_{k,l}(\xi).
\end{align}

From \eqref{DECO001}, it remains to calculate the free constant $C^{0}_{m}$. Using the third line of \eqref{YHU0.009}, it follows from integration by parts that
\begin{align}\label{EQU002}
C_{m}^{0}=-\frac{\sum^{m}_{k=0}\sum^{N_{k,d}}_{l=1}a_{kl}\int_{\partial B_{r_{0}}}\frac{\partial v_{kl}}{\partial\nu}}{\int_{\partial B_{r_{0}}}\frac{\partial v_{1}}{\partial\nu}}=\frac{\int_{\partial B_{r_{0}+\varepsilon}}\frac{\partial v_{1}}{\partial\nu}\varphi_{m}}{\int_{\partial B_{r_{0}}}\frac{\partial v_{1}}{\partial\nu}}.
\end{align}
By the same argument, we have
\begin{align*}
C^{0}=\frac{\int_{\partial B_{r_{0}+\varepsilon}}\frac{\partial v_{1}}{\partial\nu}\varphi}{\int_{\partial B_{r_{0}}}\frac{\partial v_{1}}{\partial\nu}}.
\end{align*}
Using \eqref{EXPAN005}, we obtain that $C_{m}^{0}\rightarrow C^{0}$, as $m\rightarrow\infty$. For simplicity, denote
\begin{align*}
\chi_{_{d}}=
\begin{cases}
1,&d=2,\\
d-2,&d\geq3.
\end{cases}
\end{align*}
By a straightforward computation, we obtain
\begin{align*}
\int_{\partial B_{r_{0}}}\frac{\partial v_{1}}{\partial\nu}=-\frac{|\mathbb{S}^{d-1}|\chi_{d}}{\rho_{d}(r_{0})-\rho_{d}(r_{0}+\varepsilon)},
\end{align*}
and
\begin{align*}
\int_{\partial B_{r_{0}+\varepsilon}}\frac{\partial v_{1}}{\partial\nu}\varphi_{m}=-\frac{\chi_{d}}{\rho_{d}(r_{0})-\rho_{d}(r_{0}+\varepsilon)}\sum^{m}_{k=0}\sum^{N_{k,d}}_{l=1}a_{kl}\fint_{\mathbb{S}^{d-1}}Y_{k,l}(\xi)d\xi.
\end{align*}
Therefore, substituting these two equations into \eqref{EQU002}, we obtain
\begin{align*}
C_{m}^{0}=&\sum^{m}_{k=0}\sum^{N_{k,d}}_{l=1}a_{kl}\fint_{\mathbb{S}^{d-1}}Y_{k,l}(\xi)d\xi.
\end{align*}
This, together with \eqref{DECO001} and \eqref{UAQ001}--\eqref{UAQ003}, shows that
\begin{align}\label{WMDZQ001}
u_{m}=\frac{a_{00}}{|\mathbb{S}^{d-1}|^{1/2}}+\sum^{m}_{k=1}\sum^{N_{k,d}}_{l=1}a_{kl}\Big(\tilde{c}^{_{(1)}}_{k}(r)\fint_{\mathbb{S}^{d-1}}Y_{k,l}(\xi)d\xi+\tilde{c}^{_{(2)}}_{k}(r)Y_{k,l}(\xi)\Big),
\end{align}
where $\tilde{c}^{_{(i)}}_{k}(r)$, $i=1,2,\,k=1,...,m$ are defined in \eqref{CONSTANT006}. Combining \eqref{EXPAN005} and the maximum principle, we have
\begin{align*}
\sup\limits_{x\in\overline{B_{r_{0}+\varepsilon}\setminus{B_{r_{0}}}}}|u_{m}(x)-u(x)|\leq \max\Big\{\sup\limits_{x\in\partial B_{r_{0}+\varepsilon}}|\varphi_{m}(x)-\varphi(x)|,|C_{m}^{0}-C^{0}|\Big\}\rightarrow0,
\end{align*}
as $m\rightarrow\infty$. Hence, by letting $m\rightarrow\infty$ in \eqref{WMDZQ001}, we complete the proof.

\end{proof}

\noindent{\bf{\large Acknowledgements.}}
The author would like to thank Prof. C.X. Miao for his constant encouragement and useful discussions. The author was partially supported by CPSF (2021M700358).


\bibliographystyle{plain}

\def\cprime{$'$}

\end{document}